\theoremstyle{plain}
\newtheorem{theorem}[equation]{Theorem}
\newtheorem{lemma}[equation]{Lemma}
\theoremstyle{definition}
\theoremstyle{remark}
\newtheorem{remark}[equation]{Remark}
\newcommand{\dv}{\operatorname{div}}
\newcommand{\diam}{\operatorname{diam}}
\newcommand{\tr}{\operatorname{tr}}
\numberwithin{equation}{section}
\newcommand{\bR}{\mathbb{R}}
\providecommand{\set}[1]{\{#1\}}
\providecommand{\abs}[1]{\lvert#1\rvert}
\providecommand{\Abs}[1]{\left\lvert#1\right\rvert}
\providecommand{\norm}[1]{\lVert#1\rVert}
\begin{document}
\title[Green function for non-divergence elliptic equation]
{Green's function for second order elliptic equations in non-divergence form}

\author[S. Hwang]{Sukjung Hwang}
\address[S. Hwang]{Department of Mathematics, Yonsei University, 50 Yonsei-ro, Seodaemun-gu, Seoul 03722, Republic of Korea}
\email{sukjung\_hwang@yonsei.ac.kr}
\thanks{S. Hwang is partially supported by the Yonsei University Research Fund (Post Doc. Researcher Supporting Program) of 2017 (project no.: 2017-12-0031).}

\author[S. Kim]{Seick Kim}
\address[S. Kim]{Department of Mathematics, Yonsei University, 50 Yonsei-ro, Seodaemun-gu, Seoul 03722, Republic of Korea}
\email{kimseick@yonsei.ac.kr}
\thanks{S. Kim is partially supported by National Research Foundation of Korea (NRF) Grant No. NRF-2016R1D1A1B03931680 and No. NRF-20151009350.}

\subjclass[2010]{Primary 35J08, 35B45 ; Secondary 35J47}

\keywords{}

\begin{abstract}
We construct the Green function for second order elliptic equations in non-divergence form when the mean oscillations of the coefficients satisfy the Dini condition and the domain has $C^{1,1}$ boundary.
We also obtain pointwise bounds for the Green functions and its derivatives.
\end{abstract}
\maketitle

\section{Introduction and main results}
Let $\Omega$ be a bounded $C^{1,1}$ domain (open connected set) in $\bR^n$ with $n \ge 3$.
We consider a second-order elliptic operator $L$ in non-divergence form
\begin{equation}							\label{master-nd}
L u= \sum_{i,j=1}^n a^{ij}(x) D_{ij} u.
\end{equation}
We assume that the coefficient $\mathbf{A}:=(a^{ij})$  is an $n \times n$ real symmetric matrix-valued function defined on $\bR^n$, which satisfies the uniform ellipticity condition
\begin{equation}					\label{ellipticity-nd}
\lambda \abs{\xi}^2 \le \sum_{i,j=1}^n a^{ij}(x) \xi^i \xi^j \le \Lambda \abs{\xi}^2,\quad \forall \, \xi=(\xi^1,\ldots, \xi^n) \in \bR^n,\quad \forall\, x \in \bR^n
\end{equation}
for some constants $0< \lambda \le \Lambda$.

In this article, we are concerned with construction and pointwise estimates for the Green's function $G(x,y)$ of the non-divergent operator $L$ \eqref{master-nd} in $\Omega$.
Unlike the Green's function for uniformly elliptic operators in divergence form, the Green's function for non-divergent elliptic operators does not necessarily enjoy the usual pointwise bound
\begin{equation}				\label{bound1}
G(x,y) \le c \abs{x-y}^{2-n}
\end{equation}
even in the case when the coefficient $\mathbf A$ is uniformly continuous; see \cite{Bauman84}.
On the other hand, in the case when the coefficient $\mathbf A$ is H\"older continuous, then it is well known that the Green's function satisfies the pointwise bound \eqref{bound1}; see e.g., \cite{Friedman} for the construction of fundamental solutions of parabolic operators by the parametrix method.
In this perspective, it is an interesting question to ask what is the minimal regularity condition to ensure the Green's function to have the pointwise bound \eqref{bound1}.
We shall show that if the coefficient $\mathbf A$ is of \emph{Dini mean oscillation}, then the Green's function exists and satisfies the pointwise bound \eqref{bound1}.
We shall say that a function is of Dini mean oscillation if its mean oscillation satisfies the Dini condition.
Here, we briefly describe the role of this Dini mean oscillation condition because it will be used somewhat implicitly in the paper.
First, it will imply that the coefficient $\mathbf A$ is uniformly continuous so that the Calder\'on-Zygmund $L^p$ theory can be applied.
Also, it will provide us a local $L^\infty$ estimate for the solutions of the adjoint equation $L^\ast u=0$ as appears in \eqref{eq1720m}, which is one of the main results of the very recent papers by the second author and collaborators \cite{DK17, DEK17}.
This $L^\infty$ estimate is crucial for the pointwise bound \eqref{bound1} and the uniform continuity of the coefficient $\mathbf A$ alone is not enough to produce such an estimate.
Below is a more precise formulation of Dini mean oscillation condition.

For $x\in \bR^n$ and $r>0$, we denote by $B(x,r)$ the Euclidean ball with radius $r$ centered at $x$, and denote
\[
\Omega(x,r):=\Omega \cap B(x,r).
\]
We shall say that a function $g: \Omega \to \bR $ is of Dini mean oscillation
in $\Omega$ if the mean oscillation function $\omega_g: \bR_+ \to \bR$ defined by
\begin{equation*}					
\omega_g(r):=\sup_{x\in \overline{\Omega}} \fint_{\Omega(x,r)} \,\abs{g(y)-\bar {g}_{\Omega(x,r)}}\,dy \quad \text{ where } \;\bar g_{\Omega(x,r)} :=\fint_{\Omega(x,r)} g\;
\end{equation*}
satisfies the Dini condition; i.e.,
\[
\int_0^1 \frac{\omega_g(t)}t \,dt <+\infty.
\]
It is clear that if $g$ is Dini continuous, then $g$ is of Dini mean oscillation.
However, the Dini mean oscillation condition is strictly weaker than the Dini continuity; see \cite{DK17} for an example.
Also if $g$ is of Dini mean oscillation, then $g$ is uniformly continuous in $\Omega$ with its modulus of continuity controlled by $\omega_g$; see Appendix.

The formal adjoint operator $L^\ast$ is given by
\begin{equation}\label{adj_L}
L^\ast u = \sum_{i,j=1}^n D_{ij} (a^{ij}(x) u).
\end{equation}
We need to consider the boundary value problem of the form
\begin{equation}				\label{adj_eq}
L^\ast u = \dv^2 \mathbf{g} + f\;\text{ in }\;\Omega,\quad u=\frac{\mathbf{g} \nu\cdot \nu}{\mathbf{A}\nu\cdot \nu}\;\text{ on }\;\partial \Omega,
\end{equation}
where $\mathbf{g}=(g^{ij})$ is an $n \times n$ matrix-valued function,
\[
\textstyle \dv^2 \mathbf{g}=\sum_{i,j=1}^n D_{ij}g^{ij},
\]
and $\nu$ is the unit exterior normal vector of $\partial\Omega$.
For $\mathbf{g} \in L^p(\Omega)$ and $f \in L^p(\Omega)$, where $1<p<\infty$ and $\frac{1}{p}+ \frac{1}{p'}=1$, we say that $u$ in $L^p(\Omega)$ is an adjoint solution of \eqref{adj_eq} if $u$ satisfies
\begin{equation}			\label{def01}
\int_\Omega u Lv = \int_\Omega \tr(\mathbf{g} D^2v) + \int_\Omega f v
\end{equation}
for any $v$ in $W^{2,p'}(\Omega) \cap W^{1,p'}_0(\Omega)$.
The following lemma is quoted from \cite[Lemma~2]{EM2016}.
\begin{lemma}				\label{lem01}
Let $1<p<\infty$ and assume that $\mathbf{g} \in L^p(\Omega)$ and $f \in L^p(\Omega)$.
Then there exists a unique adjoint solution $u$ in $L^p(\Omega)$.
Moreover, the following estimates holds
\[
\norm{u}_{L^p(\Omega)} \le C \left[ \norm{\mathbf g}_{L^p(\Omega)} + \norm{f}_{L^p(\Omega)} \right],
\]
where a constant $C$ depends on $\Omega$, $p$, $n$, $\lambda$, $\Lambda$, and the continuity of $\mathbf A$.
\end{lemma}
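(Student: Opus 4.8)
The plan is to construct $u$ by duality, interpreting the right-hand side of \eqref{def01} as a bounded linear functional and inverting it against the Calder\'on--Zygmund $W^{2,p'}$-solvability theory for the Dirichlet problem of $L$. Since $\mathbf A$ is of Dini mean oscillation it is uniformly continuous on $\overline\Omega$ (see Appendix), and $\Omega$ is a bounded $C^{1,1}$ domain; hence the classical theory for elliptic operators in non-divergence form with continuous (indeed $\mathrm{VMO}$) coefficients applies: for every $h \in L^{p'}(\Omega)$ there is a unique $v \in W^{2,p'}(\Omega) \cap W^{1,p'}_0(\Omega)$ with $Lv = h$ in $\Omega$, together with the a priori bound
\[
\norm{v}_{W^{2,p'}(\Omega)} \le C \norm{Lv}_{L^{p'}(\Omega)},
\]
where $C$ depends only on $\Omega$, $p$, $n$, $\lambda$, $\Lambda$, and the modulus of continuity of $\mathbf A$. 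Equivalently, $L$ is a Banach-space isomorphism of $W^{2,p'}(\Omega) \cap W^{1,p'}_0(\Omega)$ onto $L^{p'}(\Omega)$. This regularity input is the only substantial ingredient; everything that follows is soft functional analysis.

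Granting this, I would first note that
\[
F(v) := \int_\Omega \tr(\mathbf g\, D^2 v) + \int_\Omega f v
\]
is a well-defined linear functional on $W^{2,p'}(\Omega) \cap W^{1,p'}_0(\Omega)$, and by H\"older's inequality $\abs{F(v)} \le \bigl(\norm{\mathbf g}_{L^p(\Omega)} + \norm{f}_{L^p(\Omega)}\bigr)\, \norm{v}_{W^{2,p'}(\Omega)}$. Combining this with the a priori estimate gives $\abs{F(v)} \le C\bigl(\norm{\mathbf g}_{L^p(\Omega)} + \norm{f}_{L^p(\Omega)}\bigr)\, \norm{Lv}_{L^{p'}(\Omega)}$. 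Using the isomorphism, $\widetilde F(h) := F(L^{-1} h)$ defines a bounded linear functional on $L^{p'}(\Omega)$ with $\norm{\widetilde F} \le C\bigl(\norm{\mathbf g}_{L^p(\Omega)} + \norm{f}_{L^p(\Omega)}\bigr)$. Since $1 < p' < \infty$, the dual of $L^{p'}(\Omega)$ is $L^p(\Omega)$, so the Riesz representation theorem furnishes a unique $u \in L^p(\Omega)$ with $\widetilde F(h) = \int_\Omega u\, h$ for all $h \in L^{p'}(\Omega)$ and $\norm{u}_{L^p(\Omega)} = \norm{\widetilde F}$. Choosing $h = Lv$ recovers \eqref{def01} for every $v \in W^{2,p'}(\Omega) \cap W^{1,p'}_0(\Omega)$, and the norm identity yields the stated estimate.

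Uniqueness follows from the same two facts: if $u \in L^p(\Omega)$ satisfies $\int_\Omega u\, Lv = 0$ for all $v \in W^{2,p'}(\Omega) \cap W^{1,p'}_0(\Omega)$, then surjectivity of $L$ forces $\int_\Omega u\, h = 0$ for every $h \in L^{p'}(\Omega)$, so $u = 0$. For orientation I would also remark that the boundary condition in \eqref{adj_eq} is not an extra requirement to verify: a formal integration by parts shows that \eqref{def01} is precisely the weak formulation of $L^\ast u = \dv^2\mathbf g + f$ in $\Omega$ together with $(\mathbf A \nu \cdot \nu)\, u = \mathbf g\nu\cdot\nu$ on $\partial\Omega$, the boundary contribution surviving exactly because $v$ vanishes on $\partial\Omega$ while $\partial_\nu v$ does not.

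The only place where any genuine work hides is the cited $W^{2,p'}$ a priori estimate and solvability; since $\mathbf A$ is continuous this is classical (for instance the Chiarenza--Frasca--Longo theory for $\mathrm{VMO}$ coefficients, or the treatment of continuous coefficients in Gilbarg--Trudinger), and the removal of the lower-order term $\norm{v}_{L^{p'}(\Omega)}$ from the naive estimate uses the Aleksandrov maximum principle to exclude nontrivial solutions of $Lv = 0$ in $W^{1,p'}_0(\Omega)$. The dependence of $C$ on ``the continuity of $\mathbf A$'' recorded in the statement is exactly the dependence on the modulus of continuity entering this elliptic estimate.
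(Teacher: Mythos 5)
The paper does not prove this lemma at all: it is quoted verbatim from \cite[Lemma~2]{EM2016}, so there is no in-paper argument to compare against. Your duality proof---inverting $L$ on $W^{2,p'}(\Omega)\cap W^{1,p'}_0(\Omega)$ via the Calder\'on--Zygmund theory for continuous coefficients on $C^{1,1}$ domains, representing the resulting bounded functional on $L^{p'}(\Omega)$ by an element of $L^p(\Omega)$, and getting uniqueness from surjectivity of $L$---is correct, is the standard route (and essentially the one in the cited reference), and correctly identifies the one nontrivial ingredient, namely the $W^{2,p'}$ isomorphism with constant depending on the modulus of continuity of $\mathbf A$.
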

We clarify that ``the continuity'' of $\mathbf{A}$ in Lemma~\ref{lem01} specifically means ``the modulus of continuity'' of $\mathbf{A}$, which is clear from the context in \cite{EM2016}.
By the modulus of continuity of $\mathbf A$, we mean the function $\varrho_{\mathbf A}$ defined by
\[
\varrho_{\mathbf A}(t):=\sup \left\{ \, \abs{ \mathbf{A}(x)- \mathbf{A}(y)}: x,y \in \Omega, \, \abs{x-y} \le t \,\right\},\quad \forall t \ge 0.
\]
Therefore, in the case when coefficient $\mathbf A$ is of Dini mean oscillation, the constant $C$ in Lemma~\ref{lem01} depends only on $\Omega$, $p$, $n$, $\lambda$, $\Lambda$, and $\omega_{\mathbf A}$.

It is also known that if $f \in L^p(\Omega)$ with $p>\frac{n}{2}$, then the adjoint solution of the problem
\begin{equation}				\label{eq20.40th}
L^\ast u = f\;\text{ in }\;\Omega,\quad u=0 \;\text{ on }\;\Omega,
\end{equation}
is uniformly continuous in $\Omega$; see Theorem~1.8 in \cite{DEK17}.

We say $\partial \Omega$ is $C^{k,{\rm Dini}}$ if for each point $x_0 \in \partial\Omega$, there exist a constant $r>0$ independent of $x_0$ and a $C^{k, {\rm Dini}}$ function (i.e., $C^k$ function whose $k$th derivatives are uniformly Dini continuous) $\gamma: \bR^{n-1} \to \bR$ such that (upon relabeling and reorienting the coordinates axes if necessary) in a new coordinate system $(x',x^n)=(x^1,\ldots,x^{n-1},x^n)$, $x_0$ becomes the origin and
\[
\Omega \cap B(0, r)=\set{ x \in B(0, r) : x^n > \gamma(x^1, \ldots, x^{n-1})},\quad  \gamma(0')=0.
\]

A few remarks are in order before we state our main theorem.
There are many papers in the literature dealing with the existence and estimates of Green's functions or fundamental solutions of non-divergence form elliptic operators with measurable or continuous coefficients.
To our best knowledge, the first author who considered Green's function for non-divergence form elliptic operators with measurable coefficients is Bauman \cite{Bauman84b, Bauman85}, who introduced the concept of normalized adjoint solutions; see also Fabes et al. \cite{FGMS88}.
Fabes and Stroock \cite{FS84} established $L^p$-integrablity of Green's functions for non-divergence form elliptic operators with measurable coefficients.
Krylov \cite{Krylov92} showed that the weak uniqueness property holds for solutions of non-divergence form elliptic equations in $\Omega$ if and only if there is a unique Green's function in $\Omega$.
Escauriaza \cite{Esc2000} established bounds for fundamental solution for non-divergence form elliptic operators in terms of nonnegative adjoint solution.
We would like to thank Luis Escauriaza for bringing our attention to these results in the literature.

Now, we state our main theorem.

\begin{theorem}					\label{thm-main-n}
Let $\Omega$ be a bounded $C^{1, 1}$ domain in $\bR^n$ with $n\ge 3$.
Assume the coefficient $\mathbf{A}=(a^{ij})$ of the non-divergent operator $L$ in \eqref{master-nd} satisfies the uniform ellipticity condition \eqref{ellipticity-nd} and is of Dini mean oscillation in $\Omega$.
Then, there exists a Green's function $G(x,y)\ ( \text{for any } \ x, y \in \Omega, \ x\neq y)$ of the operator $L$ in $\Omega$ and it is unique in the following sense:
if $u$ is the unique adjoint solution of the problem \eqref{eq20.40th}, where $f \in L^p(\Omega)$ with $p>\frac{n}{2}$, then $u$ is represented by
\begin{equation}				\label{eq1747m}
u(y)=\int_\Omega G(x,y) f(x)\,dx.
\end{equation}
The Green function $G(x,y)$ satisfies the following pointwise estimates:
\begin{align}				
					\label{eq0529af}
\abs{G(x,y)} &\le C \abs{x-y}^{2-n}, \\
					\label{eq0530af}
\abs{D_x G(x,y)} &\le C \abs{x-y}^{1-n}  ,
\end{align}
where $C=C(n, \lambda, \Lambda, \Omega, \omega_{\mathbf A})$.
Moreover, if the boundary $\partial \Omega$ is $C^{2, {\rm Dini}}$, then we have
\begin{equation}		\label{eq0531af}
\abs{D_x^2 G(x,y)} \le C \abs{x-y}^{-n},					
\end{equation}
where $C=C(n, \lambda, \Lambda, \Omega, \omega_{\mathbf A})$.
\end{theorem}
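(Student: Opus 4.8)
The plan is to adapt the classical averaging construction of Gr\"uter--Widman, replacing the energy method---unavailable in non-divergence form---by a duality argument built on the $L^\infty$ estimate for adjoint solutions. Fix $y\in\Omega$. For small $\rho>0$ I would let the \emph{averaged Green's function} $v_{y,\rho}$ be the unique solution, lying in $W^{2,p}(\Omega)\cap W^{1,p}_0(\Omega)$ for every $1<p<\infty$, of $Lv_{y,\rho}=\abs{B(y,\rho)}^{-1}\mathbf 1_{B(y,\rho)}$ in $\Omega$ with $v_{y,\rho}=0$ on $\partial\Omega$. This is legitimate because the Dini mean oscillation hypothesis forces $\mathbf A$ to be uniformly continuous, so the Calder\'on--Zygmund $W^{2,p}$ theory and Lemma~\ref{lem01} apply with constants depending only on $n,\lambda,\Lambda,\Omega,\omega_{\mathbf A}$. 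The maximum principle shows that $v_{y,\rho}$ has a fixed sign; and testing \eqref{def01} with $v=v_{y,\rho}$ against an arbitrary $\psi\in L^\infty(\Omega)$, together with the $L^\infty$ bound for the resulting adjoint solution, gives $\norm{v_{y,\rho}}_{L^1(\Omega)}\le C$ uniformly in $\rho$ and $y$.

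The crux is the uniform bound $\abs{v_{y,\rho}(x)}\le C\abs{x-y}^{2-n}$ for $x\neq y$. Fix $x_0$, put $d=\abs{x_0-y}$, and suppose first that $4\rho\le d$ and $B(x_0,d/4)\subset\Omega$. Since $Lv_{y,\rho}=0$ in $B(x_0,d/4)$, the local maximum principle for non-divergence operators gives $\abs{v_{y,\rho}(x_0)}\le C\,d^{-n}\int_{B(x_0,d/4)}\abs{v_{y,\rho}}$. To bound the integral, let $z$ be the adjoint solution of $L^\ast z=\mathbf 1_{B(x_0,d/4)}$ with $z=0$ on $\partial\Omega$; testing \eqref{def01} with $v=v_{y,\rho}$, and using that $v_{y,\rho}$ does not change sign, yields $\int_{B(x_0,d/4)}\abs{v_{y,\rho}}=\abs{B(y,\rho)}^{-1}\bigabs{\int_{B(y,\rho)}z}\le\norm{z}_{L^\infty(\Omega)}$. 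The decisive ingredient is the endpoint $L^\infty$ estimate for adjoint solutions, $\norm{z}_{L^\infty(\Omega)}\le C\norm{\mathbf 1_{B(x_0,d/4)}}_{L^{n/2,1}(\Omega)}\le Cd^2$, which holds precisely because $\mathbf A$ is of Dini mean oscillation---it is a quantitative companion of the results of \cite{DK17,DEK17}, and by \cite{Bauman84} it fails under mere uniform continuity of $\mathbf A$. Hence $\abs{v_{y,\rho}(x_0)}\le Cd^{2-n}$. When $B(x_0,d/4)\not\subset\Omega$ one runs the same argument with the boundary version of the local maximum principle (valid for $C^{1,1}$, indeed Lipschitz, domains) using $v_{y,\rho}=0$ on $\partial\Omega$, and when $d$ is comparable to $\diam\Omega$ one uses the uniform $L^1$ bound directly. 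For $\abs{x-y}\le4\rho$, a localized Alexandrov--Bakelman--Pucci estimate and the maximum principle on $B(y,4\rho)$ give $\abs{v_{y,\rho}(x)}\le C\rho^{2-n}\le C\abs{x-y}^{2-n}$, so the bound holds for all $x\neq y$, uniformly in $\rho$. The gradient bound \eqref{eq0530af} then follows by applying the interior (and boundary) $C^1$ estimate for $L$-solutions with coefficients of Dini mean oscillation \cite{DK17} on $B(x_0,d/4)$: there the equation is homogeneous and $\abs{v_{y,\rho}}\le Cd^{2-n}$, so $\abs{D v_{y,\rho}(x_0)}\le Cd^{-1}\cdot d^{2-n}=Cd^{1-n}$.

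Next I would let $\rho\to0$. Interior $W^{2,p}$ and $C^1$ estimates turn the uniform pointwise bounds off $y$ into uniform $C^1_{\loc}(\Omega\setminus\{y\})$ bounds, so along a subsequence $v_{y,\rho_k}\to G(\cdot,y)$ in $C^1_{\loc}(\Omega\setminus\{y\})$; since $\abs{v_{y,\rho}(x)}\le C\abs{x-y}^{2-n}$ with the right-hand side in $L^q(\Omega)$ for every $q<\tfrac n{n-2}$ (a bound uniform in $\rho$), dominated convergence also gives $v_{y,\rho_k}\to G(\cdot,y)$ in $L^q(\Omega)$ for every $q<\tfrac n{n-2}$. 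The limit solves $LG(\cdot,y)=0$ in $\Omega\setminus\{y\}$, vanishes on $\partial\Omega$, and inherits \eqref{eq0529af}--\eqref{eq0530af}. For the representation \eqref{eq1747m}: if $f\in L^p(\Omega)$ with $p>\tfrac n2$ and $u$ is the adjoint solution of \eqref{eq20.40th}, then $u$ is uniformly continuous by \cite[Theorem~1.8]{DEK17}, and \eqref{def01} with $v=v_{y,\rho}$ reads $\abs{B(y,\rho)}^{-1}\int_{B(y,\rho)}u=\int_\Omega f\,v_{y,\rho}$; letting $\rho\to0$ (using $f\in L^{p}=L^{q'}$ for a suitable $q<\tfrac n{n-2}$) gives $u(y)=\int_\Omega G(x,y)f(x)\,dx$, which forces the limit to be independent of the subsequence. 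Uniqueness is then immediate: if $\widetilde G$ also satisfies \eqref{eq1747m} for all admissible $f$, then $\int_\Omega(G(\cdot,y)-\widetilde G(\cdot,y))f=0$ for all $f\in C_c^\infty(\Omega)$, so $G(\cdot,y)=\widetilde G(\cdot,y)$ a.e.\ (both lie in $L^1(\Omega)$), hence everywhere on $\Omega\setminus\{y\}$ by interior continuity. Finally \eqref{eq0531af} is obtained exactly like \eqref{eq0530af}, now with the interior and boundary $C^2$ estimate for $L$-solutions with Dini mean oscillation coefficients \cite{DK17} and with the convergence $v_{y,\rho_k}\to G(\cdot,y)$ upgraded to $C^2_{\loc}$; the boundary $C^2$ estimate is the point where $\partial\Omega\in C^{2,{\rm Dini}}$ is needed.

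The main obstacle is the \emph{sharp} exponent $2-n$ in \eqref{eq0529af}. In divergence form Gr\"uter--Widman extract it from an energy estimate on level sets together with the Sobolev inequality, and no such device is available here. The duality identity of the second paragraph is the substitute, but it yields the correct power of $d$ only when fed the \emph{endpoint} Lorentz estimate $\norm{z}_{L^\infty}\le C\norm{h}_{L^{n/2,1}}$ for the adjoint solution of $L^\ast z=h$: the easier bounds $\norm{z}_{L^\infty}\le C\norm{h}_{L^p}$ with $p>\tfrac n2$ cost a power of $d$ (or a logarithm), and $L^1$-based estimates are cruder still. Setting up this endpoint estimate for adjoint solutions is exactly the place where the Dini mean oscillation of $\mathbf A$, as opposed to plain continuity, is indispensable; everything else is a fairly standard combination of the Calder\'on--Zygmund, Krylov--Safonov, and Dini-type Schauder machineries.
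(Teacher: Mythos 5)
Your overall scheme (averaged Green's function $v_{y,\rho}$, duality against adjoint solutions, Dini-based $L^\infty$ estimates for $L^\ast$, weak compactness, and the limiting representation formula) is the same as the paper's, and the convergence, uniqueness, and derivative-estimate portions are essentially sound. The problem is the pivotal step that produces the sharp exponent $2-n$. You reduce $\int_{B(x_0,d/4)}\abs{v_{y,\rho}}\le Cd^2$ to the \emph{global} endpoint estimate $\norm{z}_{L^\infty(\Omega)}\le C\norm{h}_{L^{n/2,1}(\Omega)}$ for the adjoint solution of $L^\ast z=h$, and you assert this as a ``quantitative companion'' of \cite{DK17,DEK17} without proof. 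That estimate is not in those references, and it is not a routine consequence of them: what you actually need is the value of $z$ at the point $y$, at distance $d$ from the support of $h=\mathbf 1_{B(x_0,d/4)}$, and the bound $\abs{z(y)}\le Cd^2$ is precisely $\int_{B(x_0,d/4)}G(x,y)\,dx\le Cd^2$ --- i.e., an integrated form of the very Green's function bound you are trying to prove. Trying to derive it from the available tools fails by exactly a factor of $d^2$: the localized estimate $\sup_{\Omega(y,d/4)}\abs{z}\le Cd^{-n}\norm{z}_{L^1(\Omega(y,d/2))}$ from \cite[Lemma~2.27]{DEK17} combined with the global $L^p$ bound of Lemma~\ref{lem01} only gives $\abs{z(y)}\le C$, not $Cd^2$. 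So as written the argument is circular at its decisive point.

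The paper escapes this by arranging the duality so that only a \emph{near-field} adjoint estimate is needed, and by a trick you do not use: for $f$ supported in $\Omega(y,r)$ it solves the auxiliary Poisson problem $\Delta w=f$, notes $w\in C^{0,2-n/p}$ by Morrey, and rewrites $L^\ast u=f$ as $L^\ast u=\dv^2(w\mathbf I)$ with data of Dini mean oscillation; then \cite[Lemma~2.27]{DEK17} and \cite[Theorem~1.10]{DK17} give $\sup_{\Omega(y,r/2)}\abs{u}\le Cr^2\norm{f}_{L^\infty(\Omega(y,r))}$, hence by duality $\norm{v_\epsilon}_{L^1(\Omega(y,r))}\le Cr^2$ for balls centered at the \emph{singularity}. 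The pointwise bound then follows because $B(x_0,r)\subset B(y,3r)$ with $r=\tfrac23\abs{x_0-y}$, so the $L^1$ bound transfers to the ball around the evaluation point. If you want to keep your structure, you must either prove the $L^{n/2,1}\to L^\infty$ endpoint estimate for $L^\ast$ from scratch (which is essentially the content of the theorem) or replace your far-field duality step by the paper's $\Delta w$ reduction.
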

\begin{remark}
In the proof of Theorem~\ref{thm-main-n}, we will construct the Green's function $G^\ast(x,y)$ for the adjoint operator $L^\ast$ as a by-product.
It is characterized as follows:
for $q>\frac{n}{2}$ and $f\in L^{q}(\Omega)$,
if $v \in W^{2,q}(\Omega)\cap W^{1,q}_0(\Omega)$ is the strong solution of
\[
Lv=f \;\text{ in }\;\Omega,\quad v=0\;\text{ on }\;\partial \Omega,
\]
then, we have the representation formula
\[
v(y)=\int_\Omega G^\ast(x,y)f(x)\,dx.
\]
Also, in the proof of Theorem~\ref{thm-main-n}, we shall show that
\begin{equation}		\label{symmetry}
G(x,y)=G^\ast(y,x),\quad \forall\, x, y \in \Omega, \quad x \neq y.
\end{equation}
Finally, by the maximum principle, it is clear that $G(x,y) \ge 0$.
\end{remark}
\section{Proof of Theorem~\ref{thm-main-n}}
\subsection{Construction of Green's function}\label{SS:Green_L}

To construct Green's function, we follow the scheme of \cite{HK07}, which in turn is based on \cite{GW82}.
For $y \in \Omega$ and $\epsilon>0$, let $v_\epsilon = v_{\epsilon; y} \in W^{2,2}(\Omega) \cap W^{1,2}_0(\Omega)$ be a unique strong solution of the problem
\begin{equation}			\label{eq1655th}
 Lv=\frac{1}{\abs{\Omega(y,\epsilon)}} \, \chi_{\Omega(y,\epsilon)}\;\text{ in }\;\Omega,\quad v=0\;\text{ on }\;\partial\Omega.
\end{equation}
Note that $\mathbf A$ is uniformly continuous in $\Omega$ with its modulus of continuity controlled by $\omega_{\mathbf A}$.
Therefore, the unique solvability of the problem \eqref{eq1655th} is a consequence of standard $L^{p}$ theory; see e.g., Chapter~9 of \cite{GT}.
Also, by the same theory, we see that $v_\epsilon$  belong to $W^{2,p}(\Omega)$ for any $p \in (1,\infty)$ and we have an estimate
\begin{equation}			\label{eq1659th}
\norm{v_\epsilon}_{W^{2,p}(\Omega)} \le C \,\epsilon^{-n+\frac{n}{p}},\quad  \forall\, \epsilon \in (0, \diam \Omega),
\end{equation}
where $C=C(n, \lambda, \Lambda, p, \Omega, \omega_{\mathbf A})$.
In particular, we see that $v_\epsilon$ is continuous in $\Omega$.
Next, for $f \in C^\infty_c(\Omega)$, consider the adjoint problem
\[
L^\ast u = f\;\text{ in }\;\Omega,\quad u=0 \;\text{ on }\;\partial \Omega.
\]
By Lemma~\ref{lem01}, there exists a unique adjoint solution $u$ in $L^2(\Omega)$, and  by \eqref{def01}, we have
\begin{equation}				\label{eq20.56th}
\fint_{\Omega(y,\epsilon)} u = \int_\Omega f v_\epsilon.
\end{equation}

Let $w$ be the solution of the Dirichlet problem
\begin{equation}				\label{eq21.08th}
\Delta w = f\;\text{ in }\;\Omega,\quad w=0 \;\text{ on }\;\partial\Omega.
\end{equation}
By the standard $L^p$ theory and Sobolev's inequality, for $1<p<\frac{n}{2}$, we have
\begin{equation}				\label{eq11.21fr}
\norm{w}_{L^q(\Omega)} \le C(n, \Omega, p) \norm{f}_{L^p(\Omega)},\quad \text{where }\; \tfrac{1}{q}=\tfrac{1}{p}-\tfrac{2}{n},
\end{equation}
and for $1<p<n$, we have
\[
\norm{\nabla w}_{L^q(\Omega)} \le C(n, \Omega, p) \norm{f}_{L^p(\Omega)},\quad \text{where }\; \tfrac{1}{q}=\tfrac{1}{p}-\tfrac{1}{n}.
\]
In particular, for $\frac{n}{2}<p<n$, we have by Morrey's theorem that
\begin{equation}				\label{eq21.23th}
[w]_{C^{0,\mu}(\Omega)} \le C(n, \Omega, p) \norm{f}_{L^p(\Omega)},\quad \text{where }\; \mu=1-\tfrac{n}{q}=2-\tfrac{n}{p}.
\end{equation}
Hereafter, we set
\[
\mathbf{g}:= w \mathbf{I}.
\]
Note that by \eqref{eq20.40th} and \eqref{eq21.08th}, $u \in L^2(\Omega)$ is an adjoint solution of
\begin{equation}				\label{eq21.48th}
L^\ast u = \dv^2 \mathbf{g}\;\text{ in }\;\Omega,\quad u=0 \;\text{ on }\;\partial\Omega.
\end{equation}
By Lemma~\ref{lem01} and \eqref{eq11.21fr}, we see that $u \in L^q(\Omega)$ for  $q\in (\frac{n}{n-2},\infty)$ and that it satisfies
\begin{equation}			\label{em1.9}
\norm{u}_{L^q(\Omega)} \le C \norm{f}_{L^{nq/(n+2q)}(\Omega)},\quad\text{where }\;C=C(n, q, \lambda, \Lambda, \Omega, \omega_{\mathbf A}).
\end{equation}
Also, by \eqref{eq21.23th}, we see that $\mathbf g$ is of Dini mean oscillation in $\Omega$ with
\[
\omega_{\mathbf g}(t) \le C\norm{f}_{L^p(\Omega)}\, t^{2-\frac{n}{p}}.
\]
Therefore, by Theorem~1.8 of \cite{DEK17}, we see that $u \in C(\overline \Omega)$.
As a matter of fact, Lemma~2.27 of \cite{DEK17} and Theorem~1.10 of \cite{DK17} with a scaling argument ($ x \mapsto rx$) reveals that for any $x_0 \in \Omega$ and $0<r< \diam \Omega$, we have
\[
\sup_{\Omega(x_0,\frac12 r)} \abs{u} \le C \left( r^{-n} \norm{u}_{L^1(\Omega(x_0, r))} + r^{2-\frac{n}{p}}\norm{f}_{L^p(\Omega)} \right),
\]
where $C=C(n,p, \lambda, \Lambda, \Omega, \omega_{\mathbf A})$.
In particular, if $f$ is supported in $\Omega(y, r)$, then by \eqref{em1.9} and H\"older's inequality, we have
\begin{equation}				\label{eq10.32fr}
\sup_{\Omega(y,\frac12 r)} \abs{u} \le C \left( r^{-\frac{n}{q}} \norm{f}_{L^{nq/(n+2q)}(\Omega(y, r))} + r^{2-\frac{n}{p}}\norm{f}_{L^p(\Omega(y, r))} \right) \le C r^2 \norm{f}_{L^\infty(\Omega(y,r))}.
\end{equation}

Therefore, if $f$ is supported in $\Omega(y, r)$, then it follows from \eqref{eq20.56th} and \eqref{eq10.32fr} that
\[
\Abs{\int_{\Omega(y, r)} f v_\epsilon \,} \le C r^2 \norm{f}_{L^\infty(\Omega(y,r))}, \quad \forall\, \epsilon \in (0, \tfrac12 r).
\]
By duality, we obtain
\begin{equation}			\label{eq1712th}
\norm{v_\epsilon}_{L^1(\Omega(y, r))} \le C r^2,\quad \forall\, \epsilon \in (0, \tfrac12 r),\quad \forall\, r \in (0, \diam \Omega),
\end{equation}
where $C=C(n, \lambda, \Lambda, \Omega, \omega_{\mathbf A})$.
We define the approximate Green's function
\[
G_\epsilon(x,y)= v_{\epsilon, y}(x)=v_\epsilon(x).
\]

\begin{lemma}				\label{lem2.15}
Let $x, y \in \Omega$ with $x \neq y$.
Then
\begin{equation}				\label{eq1723w}
\abs{G_\epsilon(x,y)} \le C \abs{x-y}^{2-n}, \quad \forall  \epsilon \in(0, \tfrac13 \abs{x-y}),
\end{equation}
where $C=C(n, \lambda, \Lambda, \Omega, \omega_{\mathbf A})$.
\end{lemma}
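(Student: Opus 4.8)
The plan is to exploit the fact that $G_\epsilon(\cdot,y)=v_\epsilon$ solves $Lv_\epsilon=0$ away from the support $\Omega(y,\epsilon)$, so for $x$ far from $y$ one has a genuine interior (or boundary) solution of the homogeneous equation, which we can bound by its $L^1$-average via the local $L^\infty$ estimate already invoked in \eqref{eq10.32fr}. Concretely, fix $x,y\in\Omega$ with $x\neq y$ and write $R:=\abs{x-y}$. For $\epsilon\in(0,\tfrac13 R)$, the ball $B(x,\tfrac23 R)$ is disjoint from $\Omega(y,\epsilon)$, hence $Lv_\epsilon=0$ in $\Omega(x,\tfrac23 R)$ and $v_\epsilon=0$ on $\partial\Omega\cap B(x,\tfrac23 R)$. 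Applying the interior/boundary $L^\infty$ bound (the same estimate from Lemma~2.27 of \cite{DEK17} / Theorem~1.10 of \cite{DK17} used above, now with $f\equiv 0$ on the relevant ball) on $\Omega(x,r)$ with $r=\tfrac23 R$ gives
\[
\abs{G_\epsilon(x,y)}=\abs{v_\epsilon(x)}\le \sup_{\Omega(x,\frac12 r)}\abs{v_\epsilon}\le C\, r^{-n}\norm{v_\epsilon}_{L^1(\Omega(x,r))}.
\]

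Next I would feed in the uniform $L^1$ bound \eqref{eq1712th}. We need $\norm{v_\epsilon}_{L^1(\Omega(x,r))}$, but \eqref{eq1712th} is stated for balls centered at $y$; however, the derivation of \eqref{eq1712th} used only that $f$ is supported in a ball of radius $r$ around the center, and by translating that argument (testing against $f$ supported in $\Omega(x,r)$, using the local $L^\infty$ estimate for the adjoint solution $u$ centered at $x$, and duality exactly as before) one gets
\[
\norm{v_\epsilon}_{L^1(\Omega(x,\rho))}\le C\rho^2,\qquad \forall\,\epsilon\in(0,\tfrac12\rho),\ \forall\,\rho\in(0,\diam\Omega),
\]
with $C=C(n,\lambda,\Lambda,\Omega,\omega_{\mathbf A})$; in particular this holds with center $x$. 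Note the admissible range of $\epsilon$ is $\epsilon<\tfrac12 r=\tfrac13 R$, which matches the hypothesis of the lemma exactly — this is presumably why the constant $\tfrac13$ appears. Combining the two displays with $r=\tfrac23 R$,
\[
\abs{G_\epsilon(x,y)}\le C\,r^{-n}\cdot Cr^2 = C\,r^{2-n}=C\bigl(\tfrac23\abs{x-y}\bigr)^{2-n}=C\abs{x-y}^{2-n},
\]
which is \eqref{eq1723w}.

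The main point requiring care is the application of the local maximum principle / $L^\infty$ estimate at the boundary: when $\dist(x,\partial\Omega)$ is small we cannot use a purely interior estimate, so we must invoke the boundary version, which is exactly why the excerpt quotes Lemma~2.27 of \cite{DEK17} in its boundary form (valid for all $x_0\in\Omega$, not just interior points) together with the $C^{1,1}$ regularity of $\partial\Omega$. One should also double-check the bookkeeping on $\epsilon$: the $L^1$ estimate with center $x$ requires $\epsilon<\tfrac12\rho$ with $\rho=r=\tfrac23 R$, giving $\epsilon<\tfrac13 R$, precisely the stated hypothesis. Everything else — the disjointness of $B(x,\tfrac23 R)$ from $\supp(Lv_\epsilon)$, the scaling of the constants, and the dependence list — is routine. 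I do not anticipate a genuine obstacle; the lemma is essentially an assembly of the estimates \eqref{eq10.32fr} and \eqref{eq1712th} already established, re-centered at $x$.
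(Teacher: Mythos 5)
Your overall strategy is the paper's: for $\epsilon<\tfrac13\abs{x-y}$ the function $v_\epsilon$ solves $Lv_\epsilon=0$ in $\Omega(x,r)$ with $r=\tfrac23\abs{x-y}$, so the local maximum principle (interior and boundary form, since $v_\epsilon=0$ on $\partial\Omega$; the paper cites \cite[Theorem~9.26]{GT}) gives $\abs{v_\epsilon(x)}\le Cr^{-n}\norm{v_\epsilon}_{L^1(\Omega(x,r))}$, and it remains to bound that $L^1$ norm by $Cr^2$. However, your justification of this last step has a genuine flaw. You propose to re-derive \eqref{eq1712th} ``centered at $x$'' by testing against $f$ supported in $\Omega(x,\rho)$ and re-running the duality argument. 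But the duality identity \eqref{eq20.56th} reads $\fint_{\Omega(y,\epsilon)}u=\int_\Omega f v_\epsilon$: the left-hand side is always an average of the adjoint solution $u$ over a neighborhood of $y$ (the pole of $v_\epsilon$), no matter where $f$ is supported. The local $L^\infty$ estimate for $u$ ``centered at $x$'' controls $u$ only on $\Omega(x,\tfrac12\rho)$, which does not contain $\Omega(y,\epsilon)$; so it says nothing about the left-hand side, and the argument does not close. (Trying to control $\fint_{\Omega(y,\epsilon)}u$ instead via the homogeneous adjoint estimate \eqref{eq1720m} near $y$ plus Lemma~\ref{lem01} yields a bound of order $\rho^{-n/2}\norm{f}_{L^\infty}$, not $\rho^{2}\norm{f}_{L^\infty}$; and proving the claimed bound for $\rho\ll\abs{x-y}$ is essentially circular, since it encodes the pointwise bound you are trying to prove.)

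The repair is immediate and is exactly what the paper does: since $\abs{x-y}=\tfrac32 r$, one has $\Omega(x,r)\subset\Omega(y,3r)$, and \eqref{eq1712th} applied with center $y$ and radius $3r$ (admissible because $\epsilon<\tfrac12 r<\tfrac32 r$) gives $\norm{v_\epsilon}_{L^1(\Omega(x,r))}\le\norm{v_\epsilon}_{L^1(\Omega(y,3r))}\le Cr^2$. With that one-line substitution your proof is correct and coincides with the paper's. Your accounting for the constant $\tfrac13$ (disjointness of $B(x,r)$ from $\supp Lv_\epsilon$) is right.
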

\begin{proof}
Let $r=\frac23 \abs{x-y}$.
If $\epsilon < \frac12 r$, then $v_\epsilon$ satisfies $L v_\epsilon =0$ in $\Omega(x,r)$.
Therefore, by the standard elliptic estimate (see \cite[Theorem 9.26]{GT}) and \eqref{eq1712th}, we have
\[
\abs{v_\epsilon(x)} \le C r^{-n} \norm{v_\epsilon}_{L^1(\Omega(x,r))} \le C r^{-n} \norm{v_\epsilon}_{L^1(\Omega(y,3r))} \le C r^{2-n}.\qedhere
\]
\end{proof}

\begin{lemma}				\label{lem2.17}
For any $y \in \Omega$ and $0<\epsilon<\diam \Omega$, we have
\begin{align}
						\label{eq1709w}
\int_{\Omega\setminus \overline B(y,r)} \abs{G_\epsilon(x,y)}^{\frac{2n}{n-2}} \,dx &\le C r^{-n}, \quad \forall \, r>0,
 \\
						\label{eq1420th}
\int_{\Omega\setminus \overline B(y,r)} \abs{D_x^2 G_\epsilon(x,y)}^2 \,dx &\le C r^{-n}, \quad \forall \, r>0,
\end{align}
where $C=C(n, \lambda, \Lambda, \Omega, \omega_{\mathbf A})$.
\end{lemma}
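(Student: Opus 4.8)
The plan is to fix $y\in\Omega$ and $\epsilon\in(0,\diam\Omega)$, observe that both left-hand sides vanish when $r\ge\diam\Omega$ (then $\Omega\setminus\overline B(y,r)=\emptyset$), and henceforth assume $0<r<\diam\Omega$. Writing $v_\epsilon=G_\epsilon(\cdot,y)$ as in the construction, I would distinguish the case $\epsilon\ge\frac16 r$, where the a priori bound \eqref{eq1659th} alone is enough, from the case $\epsilon<\frac16 r$, where Lemma~\ref{lem2.15} provides a pointwise bound on $v_\epsilon$ away from $y$. In both cases one keeps track that the constants depend only on $n,\lambda,\Lambda,\Omega,\omega_{\mathbf A}$ and, crucially, not on $\epsilon$, $y$, or $r$.

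Suppose first $\epsilon\ge\frac16 r$. Applying \eqref{eq1659th} with $p=2$ gives $\int_\Omega\abs{D_x^2 G_\epsilon(x,y)}^2\,dx\le\norm{v_\epsilon}_{W^{2,2}(\Omega)}^2\le C\epsilon^{-n}\le C\,6^n r^{-n}$, which is \eqref{eq1420th}. For \eqref{eq1709w}, I would instead use \eqref{eq1659th} with the exponent $p=\tfrac{2n}{n+2}\in(1,\tfrac n2)$, for which $-n+\tfrac np=\tfrac{2-n}2$, and then invoke the Sobolev embedding $W^{2,p}(\Omega)\hookrightarrow L^{2n/(n-2)}(\Omega)$ on the bounded $C^{1,1}$ domain $\Omega$; this yields $\norm{v_\epsilon}_{L^{2n/(n-2)}(\Omega)}\le C\epsilon^{(2-n)/2}$, hence $\int_\Omega\abs{G_\epsilon(x,y)}^{2n/(n-2)}\,dx\le C\epsilon^{-n}\le C\,6^n r^{-n}$.

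Now suppose $\epsilon<\frac16 r$. Then for any $x$ with $\abs{x-y}\ge\frac r2$ one has $\epsilon<\frac13\abs{x-y}$, so Lemma~\ref{lem2.15} gives $\abs{G_\epsilon(x,y)}\le C\abs{x-y}^{2-n}$ on $\Omega\setminus B(y,\tfrac r2)$. Inequality \eqref{eq1709w} is then immediate, since $\int_{\abs z\ge r}\abs z^{(2-n)\cdot 2n/(n-2)}\,dz=\int_{\abs z\ge r}\abs z^{-2n}\,dz=c_n r^{-n}$. For \eqref{eq1420th} I would decompose $\Omega\setminus\overline B(y,r)$ into the dyadic shells $A_k:=\Omega\cap\{2^k r\le\abs{x-y}<2^{k+1}r\}$, $k\ge0$, only those with $2^kr<\diam\Omega$ being nonempty, and estimate $\int_{A_k}\abs{D^2 v_\epsilon}^2$ by a covering argument. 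Since $\epsilon<\frac16\cdot 2^k r$, the function $v_\epsilon$ solves $Lv_\epsilon=0$ on a neighbourhood of the doubled shell $\hat A_k:=\Omega\cap\{\tfrac12 2^kr\le\abs{x-y}<4\cdot 2^kr\}$, so covering $\overline{A_k}$ by boundedly many balls of radius comparable to $\min(2^kr,\rho_0)$ — with $\rho_0$ the fixed scale on which $\partial\Omega$ is a $C^{1,1}$ graph — and applying on each ball the interior, respectively boundary (using $v_\epsilon\in W^{1,2}_0(\Omega)$), $W^{2,2}$-estimate of \cite[Theorems 9.11 and 9.13]{GT} after rescaling to the correct power of the radius, one obtains $\int_{A_k}\abs{D^2 v_\epsilon}^2\le C\min(2^kr,\rho_0)^{-4}\norm{v_\epsilon}_{L^2(\hat A_k)}^2$. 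Inserting the pointwise bound $\abs{v_\epsilon}\le C(2^kr)^{2-n}$ on $\hat A_k$ together with $\abs{\hat A_k}\le C(2^kr)^n$, and using $2^kr<\diam\Omega$ to absorb the factor $\rho_0$ in the shells with $2^kr\ge\rho_0$, gives $\int_{A_k}\abs{D^2 v_\epsilon}^2\le C(2^kr)^{-n}$; summing the geometric series $\sum_{k\ge0}(2^kr)^{-n}$ then yields $Cr^{-n}$.

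The step I expect to be the main obstacle is the dyadic-covering estimate for \eqref{eq1420th}: one must make the constants uniform near $\partial\Omega$, where the $W^{2,2}$-estimate is available only at scales below $\rho_0$, so for the finitely many shells at scales $\gtrsim\rho_0$ one is forced to cover by $O(1)$ balls of radius $\sim\rho_0$ and then recover the clean power $(2^kr)^{-n}$ from $\rho_0\lesssim 2^kr<\diam\Omega$; and one must check that every (enlarged) ball used in the cover stays disjoint from $B(y,\epsilon)$, so that $v_\epsilon$ is genuinely an $L$-solution of the homogeneous equation there — this is exactly what $\epsilon<\frac16 r$ together with the choice of shells guarantees. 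The remaining points — the overlap count of the cover, the rescaling in \cite{GT}, and the geometric summation — are routine.
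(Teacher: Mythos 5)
Your treatment of \eqref{eq1709w} coincides with the paper's: the same split into $\epsilon\gtrsim r$ (handled by \eqref{eq1659th} with $p=\frac{2n}{n+2}$ plus Sobolev embedding) and $\epsilon\lesssim r$ (handled by the pointwise bound of Lemma~\ref{lem2.15} and integration of $\abs{x-y}^{-2n}$). For \eqref{eq1420th}, however, you take a genuinely different route. The paper argues by duality through the adjoint problem: for $\mathbf g\in C_c^\infty(\Omega\setminus\overline B(y,r))$ it pairs $D^2 v_\epsilon$ with $\mathbf g$ via \eqref{eq1536fr}, applies the interior $L^\infty$ estimate \eqref{eq1720m} for adjoint solutions (from \cite{DEK17}, where the Dini mean oscillation hypothesis enters) together with Lemma~\ref{lem01}, and reads off the $L^2$ bound on $D^2v_\epsilon$ by duality. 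You instead give a direct real-variable argument: dyadic shells around $y$, local interior and boundary Calder\'on--Zygmund $W^{2,2}$ estimates at scale $2^kr$ (rescaled, with constants uniform because rescaling only improves the modulus of continuity of $\mathbf A$ and the $C^{1,1}$ character of $\partial\Omega$), the pointwise bound from Lemma~\ref{lem2.15} on the doubled shells, and a geometric sum. This is the classical Gr\"uter--Widman/Hofmann--Kim Caccioppoli-on-shells scheme, and it is correct: your arithmetic $(2^kr)^{-4}\cdot(2^kr)^{4-2n}\cdot(2^kr)^n=(2^kr)^{-n}$ checks out, the condition $\epsilon<\frac16 r$ does keep every enlarged ball away from the support of $Lv_\epsilon$, and the finitely many shells at scales above $\rho_0$ are absorbed using $2^kr<\diam\Omega$ exactly as you indicate. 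What each approach buys: yours is more self-contained at this point of the paper, using only Lemma~\ref{lem2.15} and standard $L^p$ theory for uniformly continuous coefficients; the paper's duality argument avoids any covering near the boundary and is the same template it reuses for \eqref{eq1712th} and for the adjoint Green's function, so it keeps the exposition uniform.
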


\begin{proof}
We first establish \eqref{eq1709w}.
In the case when $r>3\epsilon$, we get from \eqref{eq1723w} that
\[
\int_{\Omega\setminus \overline B(y,r)} \abs{G_\varepsilon(x,y)}^{\frac{2n}{n-2}}\,dx \le C \int_{\Omega \setminus \overline B(y,r)} \abs{x-y}^{-2n}\,dx \le C r^{-n}.
\]
In the case when $r\le 3\epsilon$, by \eqref{eq1659th} with $p=\frac{2n}{n+2}$ and the Sobolev's inequality, we have
\begin{equation}				\label{eq1417m}
\norm{v_\epsilon}_{L^{\frac{2n}{n-2}}(\Omega)} \le C \norm{v_\epsilon}_{W^{2,\frac{2n}{n+2}}(\Omega)}  \le C \epsilon^{1-\frac{n}{2}} \le C r ^{1-\frac{n}{2}},
\end{equation}
and thus we still get \eqref{eq1709w}.

Next, we turn to the proof of \eqref{eq1420th}.
It is enough to consider the case when $r>2\epsilon$.
Indeed, by \eqref{eq1659th}, we have
\begin{equation*}				
\int_{\Omega \setminus \overline B(y,r)} \abs{D^2 v_\epsilon}^2 \le  \int_\Omega \,\abs{D^2 v_\epsilon}^2 \le  C \epsilon^{-n} \le C r^{-n}.
\end{equation*}
For $\mathbf{g} \in C^\infty_c(\Omega\setminus \overline B(y,r))$, let $u \in L^2(\Omega)$ be an adjoint solution of \eqref{eq21.48th} so that we have
\begin{equation}				\label{eq1536fr}
\fint_{\Omega(y,\epsilon)} u = \int_\Omega \tr(\mathbf{g} D^2 v_\epsilon).
\end{equation}
Since $\mathbf{g} = 0$ in $\Omega(y,r)$, we see that  $u$ is continuous on $\overline \Omega(y, \frac12 r)$ by \cite[Theorem~1.8]{DEK17}.
In fact, it follows from \cite[Lemma~2.27]{DEK17} that
\begin{equation}			\label{eq1720m}
\sup_{\Omega(y, \frac12 r)} \, \abs{u} \le C r^{-n} \norm{u}_{L^1(\Omega(y,r))}.
\end{equation}
Therefore, by H\"older's inequality and Lemma~\ref{lem01}, we have
\[
\sup_{\Omega(y, \frac12 r)} \, \abs{u} \le C r^{-\frac{n}{2}} \norm{u}_{L^2(\Omega(y,r))} \le  r^{-\frac{n}{2}} \norm{\mathbf g}_{L^2(\Omega)}.
\]
Since $\mathbf g$ is supported in $\Omega \setminus \overline B_r(y)$, by \eqref{eq1536fr} and the above estimate, we have
\[
\Abs{\int_{\Omega \setminus \overline B(y,r)} \tr(\mathbf{g} D^2 v_\epsilon)} \le C r^{-\frac{n}{2}} \norm{\mathbf g}_{L^2(\Omega\setminus \overline B(y,r))}.
\]
Therefore, \eqref{eq1420th} follows by duality.
\end{proof}

\begin{lemma}					
For any $y \in \Omega$ and $0<\epsilon<\diam \Omega$, we have
\begin{align}
						\label{eq1151th}
\abs{\set{x \in \Omega : \abs{G_\epsilon(x,y)}>t}} &\le C t^{-\frac{n}{n-2}}, \quad \forall \, t>0, \\
						\label{eq1347th}
\abs{\set{x \in \Omega : \abs{D^2_x  G_\epsilon(x,y)}>t}} &\le C t^{-1},  \quad \forall \, t>0,
\end{align}
where $C=C(n, \lambda, \Lambda, \Omega, \omega_{\mathbf A})$.
\end{lemma}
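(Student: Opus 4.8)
The plan is to derive both distribution-function bounds from the integral estimates of Lemma~\ref{lem2.17}, by splitting $\Omega$ into the part near $y$ and the part away from $y$ and then optimizing over the splitting radius. Every constant that appears below depends only on $n,\lambda,\Lambda,\Omega,\omega_{\mathbf A}$ because the same is true of the constants in Lemmas~\ref{lem2.15} and~\ref{lem2.17}; in particular none of them depends on $\epsilon$, which is exactly what makes these bounds useful later when passing to the limit $\epsilon \to 0$.

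First I would prove \eqref{eq1151th}. Fix $t>0$ and a radius $\rho>0$ to be chosen, and write
\[
\abs{\set{x\in\Omega:\abs{G_\epsilon(x,y)}>t}}\le \abs{B(y,\rho)}+\abs{\set{x\in\Omega\setminus\overline B(y,\rho):\abs{G_\epsilon(x,y)}>t}}.
\]
The first term is $\le C\rho^n$. For the second term, Chebyshev's inequality together with \eqref{eq1709w} gives
\[
\abs{\set{x\in\Omega\setminus\overline B(y,\rho):\abs{G_\epsilon(x,y)}>t}}\le t^{-\frac{2n}{n-2}}\int_{\Omega\setminus\overline B(y,\rho)}\abs{G_\epsilon(x,y)}^{\frac{2n}{n-2}}\,dx\le C\,t^{-\frac{2n}{n-2}}\rho^{-n}.
\]
Choosing $\rho$ so that $\rho^{2n}=t^{-\frac{2n}{n-2}}$, i.e.\ $\rho=t^{-\frac{1}{n-2}}$, balances the two contributions and yields $\abs{\set{x\in\Omega:\abs{G_\epsilon(x,y)}>t}}\le C\,t^{-\frac{n}{n-2}}$. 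If this choice of $\rho$ exceeds $\diam\Omega$ — which happens only for $t$ below a fixed threshold — one instead uses the trivial bound $\abs{\set{\,\cdot\,}}\le\abs{\Omega}\le C(\diam\Omega)^n\le C\,t^{-\frac{n}{n-2}}$.

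The bound \eqref{eq1347th} is entirely analogous, with \eqref{eq1420th} in place of \eqref{eq1709w}: for $t>0$ and $\rho>0$,
\[
\abs{\set{x\in\Omega:\abs{D_x^2G_\epsilon(x,y)}>t}}\le\abs{B(y,\rho)}+t^{-2}\int_{\Omega\setminus\overline B(y,\rho)}\abs{D_x^2G_\epsilon(x,y)}^2\,dx\le C\rho^n+C\,t^{-2}\rho^{-n},
\]
and the choice $\rho=t^{-\frac1n}$ (balancing $\rho^{2n}=t^{-2}$) gives $\abs{\set{x\in\Omega:\abs{D_x^2G_\epsilon(x,y)}>t}}\le C\,t^{-1}$, with the same trivial estimate handling small $t$. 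I do not expect a genuine obstacle here: this is the standard ``split and optimize'' passage from integrability on exterior annuli to a weak-type estimate, and the only point needing a word of care is truncating the splitting radius at $\diam\Omega$, which is harmless since the target estimates are trivially true for bounded $t$.
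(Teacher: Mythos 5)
Your proof is correct and is essentially identical to the paper's: the authors also split $\Omega$ into $\overline B(y,r)$ and its complement with $r=t^{-1/(n-2)}$ (resp.\ $r=t^{-1/n}$), use the trivial volume bound on the ball and Chebyshev with \eqref{eq1709w} (resp.\ \eqref{eq1420th}) outside. Your extra caveat about $\rho$ exceeding $\diam\Omega$ is harmless but unnecessary, since Lemma~\ref{lem2.17} is stated for all $r>0$.
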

\begin{proof}
We first establish \eqref{eq1151th}.
Let
\[
A_t=\set{x \in \Omega:\abs{G_\epsilon(x,y)}>t}
\]
and take $r=t^{-\frac{1}{n-2}}$.
Then, by \eqref{eq1709w}, we get
\[
\abs{A_t\setminus \overline B(y,r)}\le t^{-\frac{2n}{n-2}}
\int_{A_t\setminus\overline B(y,r)} \abs{G_\epsilon(x,y)}^{\frac{2n}{n-2}}\,dx \le Ct^{-\frac{2n}{n-2}}\,t^{\frac{n}{n-2}}=Ct^{-\frac{n}{n-2}}.
\]
Since $\abs{A_t\cap \overline B(y,r)}\le C r^n =Ct^{-\frac{n}{n-2}}$, obviously we thus obtain \eqref{eq1151th}.

Next, we prove \eqref{eq1347th}.
Let
\[
A_t=\set{x \in \Omega:\abs{D^2_x G_\epsilon(x,y)}>t}
\]
and take $r=t^{-\frac{1}{n}}$.
Then, by \eqref{eq1420th}, we have
\[
\abs{A_t\setminus \overline B(y,r)}\le t^{-2}
\int_{A_t\setminus\overline B(y,r)} \abs{D_x^2 G_\epsilon(x,y)}^{2}\,dx \le Ct^{-2}\,t=Ct^{-1}.
\]
Since $\abs{A_t\cap \overline B(y,r)}\le C r^n =Ct^{-1}$, we get \eqref{eq1347th}.
\end{proof}

We are now ready to construct a Green's function.
By Lemma~\ref{lem2.17}, for any $r>0$, we have
\[
\sup_{0<\epsilon <\diam\Omega} \norm{G_\epsilon(\cdot, y)}_{W^{2,2}(\Omega \setminus \overline B(y,r))} <+\infty.
\]
Therefore, by applying a diagonalization process, we see that there exists a sequence of positive numbers $\set{\epsilon_i}_{i=1}^\infty$ with $\lim_{i\to \infty} \epsilon_i=0$ and a function $G(\cdot, y)$, which belongs to $W^{2,2}(\Omega \setminus \overline B(y,r))$ for any $r>0$, such that
\begin{equation}				\label{eq1640m}
G_{\epsilon_i}(\cdot, y) \rightharpoonup G(\cdot, y) \;\text{ weakly in } W^{2,2}(\Omega \setminus \overline B(y,r)),\quad \forall\, r>0.
\end{equation}
Note that by \eqref{eq1420th}, we have
\begin{equation}				\label{eq1339fr}
\int_{\Omega\setminus \overline B(y,r)} \abs{D_x^2 G(x,y)}^2 \,dy \le C r^{-n},\quad \forall\, r>0.
\end{equation}
By compactness of the embedding $W^{2,2}  \hookrightarrow L^{\frac{2n}{n-2}}$, we also get from \eqref{eq1709w} that
\begin{equation*}				
\int_{\Omega\setminus \overline B(y,r)} \abs{G(x,y)}^{\frac{2n}{n-2}} \,dx \le C r^{-n},
\quad \forall\, r>0,
\end{equation*}
On the other hand,  \eqref{eq1151th} implies that for $1<p<\frac{n}{n-2}$, we have (see e.g., Section~3.5 in \cite{HK07})
\[
\sup_{0<\epsilon <\diam\Omega} \norm{G_\epsilon(\cdot, y)}_{L^p(\Omega)} <+\infty.
\]
Therefore, by passing to a subsequence if necessary, we see that
\[
G_{\epsilon_i}(\cdot, y) \rightharpoonup G(\cdot, y)\;\text{ weakly in } \; L^p(\Omega),\quad \forall\, p \in (1, \tfrac{n}{n-2}).
\]

For $f \in L^q(\Omega)$ with $q>\frac{n}{2}$, let $u$ be the unique adjoint solution in $L^q(\Omega)$ of the problem \eqref{eq20.40th}.
Then by \eqref{eq20.56th}, we have
\begin{equation*}
\fint_{\Omega(y,\epsilon_i)} u(x)\,dx = \int_\Omega G_{\epsilon_i}(x,y)f(x)\,dx.
\end{equation*}
By taking the limit, we get the representation formula \eqref{eq1747m}, which yields the uniqueness of the Green's function.

Finally, from \eqref{eq1640m} and \eqref{eq1655th}, we find that $G(\cdot, y)$ belongs to $W^{2,2}(\Omega\setminus \overline B(y,r))$ and satisfies $L G(\cdot, y) =0$ in $\Omega\setminus \overline B(y,r)$ for all $r>0$.
Since $\mathbf A$ is uniformly continuous in $\Omega$, by the standard $L^{p}$ theory {(see e.g., \cite{GT}), we then see that $G(\cdot, y)$ is continuous in $\Omega \setminus \set{y}$.
Moreover, by the same reasoning, we see from Lemma~\ref{lem2.17} that $G_\epsilon(\cdot, y)$ is equicontinuous on $\Omega\setminus \overline B(y,r)$ for any $r>0$.
Therefore, we may assume, by passing if necessary to another subsequence, that
\begin{equation}			\label{eq10.37w}
G_{\epsilon_i}(\cdot, y) \to G(\cdot, y)\;\text{ uniformly on }\; \Omega\setminus \overline B(y,r),\quad  \forall\, r>0 .
\end{equation}
In particular, from Lemma~\ref{lem2.15}, we see that
\[
\abs{G(x,y)} \le C\abs{x-y}^{2-n}.
\]
Therefore,  we have shown \eqref{eq0529af}.

\subsection{Construction of Green's function for the adjoint operator}

To construct Green's function for the adjoint operator $L^\ast$ given in \eqref{adj_L}, we follow the same scheme in Section~\ref{SS:Green_L}.
For $y \in \Omega$ and $\epsilon>0$, consider the following adjoint problem:
\begin{equation}			\label{eq1544mo}
L^\ast u=\frac{1}{\abs{\Omega(y,\epsilon)}} \chi_{\Omega(y,\epsilon)}\;\text{ in }\;\Omega,\quad u=0\;\text{ on }\;\partial\Omega.
\end{equation}
By Lemma~\ref{lem01}, there exists a unique adjoint solution $u_\epsilon = u_{\epsilon; y}$ in $L^2(\Omega)$ such that
\begin{equation*}			
\norm{u_\epsilon}_{L^{2}(\Omega)} \le C\epsilon^{-\frac{n}{2}}, \quad \forall \,\epsilon \in (0, \diam \Omega),
\end{equation*}
where $C=C(n, \lambda, \Lambda, \Omega, \omega_{\mathbf A})$.
Next, for $f \in C^\infty_c(\Omega)$, consider the problem
\begin{equation*}				
L v = f\;\text{ in }\;\Omega,\quad v=0 \;\text{ on }\;\partial \Omega.
\end{equation*}
By the standard $L^p$ theory (see e.g., \cite{GT}) and the Sobolev's inequality, we have
\[
\norm{v}_{L^q(\Omega)} \le C \norm{f}_{L^{nq/(n+2q)}(\Omega)},
\]
where $C=C(n, \lambda, \Lambda, \Omega, q, \omega_{\mathbf A})$.
Also, we have
\[
\sup_{\Omega(x_0,\frac12 r)}  \abs{v} \le C \left( r^{-n} \norm{v}_{L^1(\Omega(x_0, r))} + r^2 \norm{f}_{L^\infty(\Omega(x_0, r))} \right),\quad
\forall \,x_0 \in\Omega, \quad \forall\, r \in (0, \diam \Omega).
\]
Therefore, if $f$ is supported in $\Omega(y, r)$, then by the above estimates and H\"older's inequality, we get
\begin{equation*}				
\sup_{\Omega(y,\frac12 r)}  \abs{v} \le C r^2 \norm{f}_{L^\infty(\Omega(y,r))}.
\end{equation*}
From the identity
\begin{equation}\label{eq1560mo}
\fint_{\Omega(y,\epsilon)} v = \int_\Omega f u_\epsilon ,
\end{equation}
we then see that (c.f. \eqref{eq1712th} above)
\begin{equation*}				
\norm{u_\epsilon}_{L^1(\Omega(y, r))} \le C r^2,\quad \forall\, \epsilon \in (0, \tfrac12 r) ,\quad \forall\, r \in (0, \diam \Omega),
\end{equation*}
where $C=C(n, \lambda, \Lambda, \Omega, \omega_{\mathbf A})$.
We define
\[
G^\ast_\epsilon(x,y)= u_{\epsilon, y}(x)=u_\epsilon(x).
\]
Then, similar to Lemma~\ref{lem2.15}, for $x, y \in \Omega$ with $x \neq y$, we have
\begin{equation}				\label{eq1700mo}
\abs{G^\ast_\epsilon(x,y)} \le C \abs{x-y}^{2-n} ,\quad \forall  \epsilon \in (0, \tfrac13 \abs{x-y}).
\end{equation}
Indeed, if we set $r=\frac23 \abs{x-y}$, then for $\epsilon < \frac12 r$, we have $L^\ast u_\epsilon =0$ in $\Omega(x,r)$.
Since $\mathbf A$ is of Dini mean oscillation, by \cite[Lemma~2.27]{DEK17}, we have
\[
\abs{u_\epsilon(x)} \le C r^{-n} \norm{u_\epsilon}_{L^1(\Omega(x,r))} \le C r^{-n} \norm{u_\epsilon}_{L^1(\Omega(y,3r))} \le C r^{2-n},
\]
which yields \eqref{eq1700mo}.

By using \eqref{eq1700mo} and following the proof of Lemma~\ref{lem2.17}, we get the following estimate, which is a counterpart of \eqref{eq1709w}.
For any $y \in \Omega$ and $0<\epsilon<\diam \Omega$, we have
\begin{equation}			\label{eq1435m}
\int_{\Omega\setminus \overline B(y,r)} \abs{G^\ast_\epsilon(x,y)}^{\frac{2n}{n-2}} \,dx \le C r^{-n},
\quad \forall\, r>0.
\end{equation}
Indeed, by taking $f=\frac{1}{\abs{\Omega(y,\epsilon)}} \chi_{\Omega(y,\epsilon)}$ and $q=\frac{2n}{n-2}$ in \eqref{em1.9}, we have
\[
\norm{u_\epsilon}_{L^{\frac{2n}{n-2}}(\Omega)} \le C \norm{f}_{L^{\frac{2n}{n+2}}(\Omega)} \le C \epsilon^{1-\frac{n}{2}},
\]
which corresponds to \eqref{eq1417m}.
Then, by the same proof of Lemma~\ref{lem2.17}, we get \eqref{eq1435m}.
By using \eqref{eq1435m} and proceeding as in the proof of  \eqref{eq1151th}, we obtain
\[
\abs{\set{x \in \Omega : \abs{G^\ast_\epsilon(x,y)}>t}} \le C t^{-\frac{n}{n-2}}, \quad \forall\, t>0,
\]
which in turn implies that for $0<p<\frac{n}{n-2}$, there exists a constant $C_p$ such that
\[
\int_\Omega \,\abs{G^\ast_\epsilon(x,y)}^p \,dx \le C_p,\quad \forall\, y \in \Omega, \;\; \forall\, \epsilon \in (0,\diam \Omega).
\]
Therefore, for any $1 < p< \frac{n}{n-2}$, we obtain
\[
\sup_{0 < \epsilon < \diam \Omega} \|G^\ast_\epsilon (\cdot, y)\|_{L^{p}(\Omega)} < + \infty,
\]
and thus, there exists a sequence of positive numbers $\{\epsilon_j\}_{j=1}^{\infty}$ with $\lim_{j\to\infty} \epsilon_j = 0$ and a function $G^{*}(\cdot, y) \in L^p(\Omega)$ such that
\begin{equation}			\label{eq2257tue}
G^\ast_{\epsilon_j}(\cdot, y) \rightharpoonup G^\ast(\cdot, y)\;\text{ weakly in } \; L^p(\Omega).
\end{equation}

For $f\in L^{q}(\Omega)$ with $q > \frac{n}{2}$, let $v \in W^{2,q}(\Omega)\cap W^{1,q}_0(\Omega)$ be the strong solution of
\[
Lv=f \;\text{ in }\;\Omega,\quad v=0\;\text{ on }\;\partial \Omega.
\]
Then, we have (c.f. \eqref{eq1560mo} above)
\[
\fint_{\Omega(y,\epsilon_{j})} v(x) \,dx = \int_\Omega  G^\ast_{\epsilon_j} (x,y) f(x)\,dx,
\]
and thus, by taking the limit, we also get the representation formula
\[
v(y)=\int_\Omega G^\ast(x,y)f(x)\,dx.
\]
Finally,  by \eqref{eq1544mo}, \eqref{eq2257tue}, and Theorem~1.10 of \cite{DK17}, we see  that $G^\ast(\cdot, y)$ is continuous away from its singularity at $y$.

\subsection{Proof of symmetry \eqref{symmetry}}

For $x \neq y$ in $\Omega$, choose two sequences $\set{\epsilon_i}_{i=1}^{\infty}$ and $\set{\delta_j }_{j=1}^{\infty}$ such that $0< \epsilon_i, \, \delta_j < \frac{1}{3} \abs{x-y}$ for all $i$, $j$ and $\epsilon_i$, $\delta_j \to 0$ as $i$, $j \to \infty$.
From the construction of $G_\epsilon(\cdot, y)$ and $G^\ast_\epsilon(\cdot, x)$, we observe the following:
\[
\fint_{\Omega(x,\delta_j)} G_{\epsilon_i}(\cdot , y)=
\int_{\Omega} G_{\epsilon_i} (\cdot, y) \, L^\ast G^\ast_{\delta_j} (\cdot, x) = \int_{\Omega} LG_{\epsilon_i} (\cdot, y) \, G^\ast_{\delta_j} (\cdot, x)
= \fint_{\Omega(y,\epsilon_i)} G^\ast_{\delta_j}(\cdot , x).
\]
By \eqref{eq2257tue}  and the continuity of $G^\ast(\cdot, x)$ away from its singularity, we get
\[
\lim_{i\to \infty}\lim_{j\to \infty} \fint_{\Omega(y,\epsilon_i)} G^\ast_{\delta_j}(\cdot , x) = \lim_{i\to \infty} \fint_{\Omega(y,\epsilon_i)} G^\ast(\cdot , x) = G^\ast(y,x).
\]
On the other hand,  by the continuity of $G_{\epsilon_i}(\cdot, y)$ and \eqref{eq10.37w}, we obtain
\[
\lim_{i\to \infty}\lim_{j\to \infty}\fint_{\Omega(x,\delta_j)} G_{\epsilon_i}(\cdot , y) = \lim_{i\to \infty}  G_{\epsilon_i}(x, y) = G(x,y).
\]
We have thus shown that
\[
G(x,y) = G^\ast (y,x), \quad  \forall x\neq y.
\]

So far, we have seen that there is a subsequence of $\set{\epsilon_i}$ tending to zero such that
$G_{\epsilon_{k_{i}}}(\cdot, y) \to G(\cdot, y)$.
However, for $x\neq y$, we have
\[
G_{\epsilon} (x,y) = \lim_{j\to \infty} \fint_{\Omega(x, \delta_j)} G_{\epsilon}(\cdot, y)
= \lim_{j\to \infty} \fint_{\Omega(y,\epsilon)} G^\ast_{\delta_j} (\cdot, x)
= \fint_{\Omega(y,\epsilon)} G^\ast (\cdot, x) =  \fint_{\Omega(y,\epsilon)} G (\cdot, y).
\]
That is, we have
\[
G_\epsilon(x,y)=\fint_{\Omega(y,\epsilon)} G(x,z) \,dz.
\]
Therefore, we find that
\[
\lim_{\epsilon \to 0} G_{\epsilon} (x,y) = G(x,y), \quad \forall x\neq y.
\]

\subsection{Proof of estimates \eqref{eq0530af} and \eqref{eq0531af}}

We now show \eqref{eq0530af} and \eqref{eq0531af}.
Let $v=G(\cdot, y)$ and $r=\frac13 \abs{x-y}$.
Note that we have
\[
L v =0\;\text{ in }\;\Omega(x,2r),\quad v=0\;\text{ on }\;\partial\Omega(x,2r).
\]
By the standard elliptic theory, we have
\[
\norm{D v}_{L^\infty(\Omega(x, r))} \le C r^{-1-n} \norm{v}_{L^1(\Omega(x, 2r))}.
\]
Therefore, by \eqref{eq0529af}, we get
\[
\abs{D v(x)} \le C r^{-1-n} \norm{v}_{L^1(\Omega(y, 4r))} \le C r^{1-n},
\]
from which \eqref{eq0530af} follows.

In the case when $\Omega$ has $C^{2, {\rm Dini}}$ boundary, by Theorem~1.5 of \cite{DEK17}, we have
\[
\sup_{\Omega(x, r)} \abs{D^2 v} \le C r^{-n} \norm{D^2 v}_{L^1(\Omega(x, 2r))}.
\]
Therefore, by H\"older's inequality and \eqref{eq1339fr}, we have
\[
\abs{D^2 v(x)}  \le C r^{-\frac{n}{2}} \norm{D^2 v}_{L^2(\Omega(x, 2r))} \le  C r^{-\frac{n}{2}} \norm{D^2 v}_{L^2(\Omega \setminus B(y,r))} \le C r^{-n},
\]
from which \eqref{eq0531af} follows.

\section{Appendix}
The following lemma is well known to experts and essentially due to Campanato.
We provide its proof for the reader's convenience.
\begin{lemma}
Let $\Omega \in \bR^n$ be a domain satisfying the following condition:
there exists a constant $A_0 \in (0,1]$ such that for every $x \in\Omega$ and $0<r<\diam \Omega$, we have
\[
\abs{\Omega(x, r)} \ge A_0 \abs{B(x,r)},\quad \text{where }\,\Omega(x,r):= \Omega \cap B(x, r).
\]
Suppose that a function $u \in L_{\rm loc}^1(\overline \Omega)$ is of Dini mean oscillation in $\Omega$, then  there exists a uniformly continuous function $u^\ast$ on $\Omega$ such that $u^\ast = u$ a.e. in $\Omega$.
\end{lemma}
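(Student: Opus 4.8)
The plan is to carry out Campanato's classical identification of functions of Dini mean oscillation with uniformly continuous functions, the point being to convert the Dini hypothesis on $\omega_u$ into a genuine modulus of continuity. For $x\in\overline\Omega$ and $0<r<\diam\Omega$ write $\bar u_{x,r}:=\fint_{\Omega(x,r)}u$ (finite since $u\in L^1_{\loc}(\overline\Omega)$). First I would record the two elementary consequences of the measure density condition $\abs{\Omega(x,r)}\ge A_0\abs{B(x,r)}$ that drive everything: whenever $\Omega(x,\rho)\subseteq\Omega(z,s)$ with $z\in\overline\Omega$,
\[
\abs{\bar u_{x,\rho}-\bar u_{z,s}}\le\fint_{\Omega(x,\rho)}\abs{u-\bar u_{z,s}}\le\frac{\abs{\Omega(z,s)}}{\abs{\Omega(x,\rho)}}\fint_{\Omega(z,s)}\abs{u-\bar u_{z,s}}\le\frac{1}{A_0}\Bigl(\tfrac{s}{\rho}\Bigr)^{n}\omega_u(s),
\]
so that $\abs{\bar u_{x,r}-\bar u_{x,2r}}\le 2^{n}A_0^{-1}\omega_u(2r)$ and, for $\abs{x-z}\le r$, $\abs{\bar u_{x,r}-\bar u_{z,r}}\le 2^{n+1}A_0^{-1}\omega_u(2r)$ (comparing both to $\bar u_{x,2r}$). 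Applying the same inequality with $\rho=2^{-k}$, $s=t\in[2^{-k},2^{-k+1}]$ and then halving the oscillation gives, after taking the supremum in $x$, $\omega_u(2^{-k})\le 2^{n+1}A_0^{-1}\omega_u(t)$ for all such $t$; integrating $dt/t$ over $[2^{-k},2^{-k+1}]$ and summing yields $\sum_{k\ge 1}\omega_u(2^{-k})\le C(n,A_0)\int_0^1\frac{\omega_u(t)}{t}\,dt<\infty$, and the same bookkeeping gives $\sigma(r):=\sum_{j\ge 0}\omega_u(2^{-j}r)\le C(n,A_0)\int_0^{2r}\frac{\omega_u(t)}{t}\,dt\to 0$ as $r\to0^{+}$; in particular $\omega_u(r)\to0$.

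With these in hand, for each fixed $x\in\overline\Omega$ the telescoping estimate $\abs{\bar u_{x,2^{-k}}-\bar u_{x,2^{-\ell}}}\le 2^{n}A_0^{-1}\sum_{j\ge\ell}\omega_u(2^{-j})$ shows $(\bar u_{x,2^{-k}})_k$ is Cauchy; by the first displayed inequality the limit is also $\lim_{r\to0}\bar u_{x,r}$, and I would define $u^\ast(x)$ to be this limit, noting moreover $\abs{u^\ast(x)-\bar u_{x,r}}\le 2^{n}A_0^{-1}\sigma(r)$. Since $\partial\Omega$ is Lebesgue-null, for a.e.\ $x\in\Omega$ the point $x$ lies in the open set where $\Omega(x,r)=B(x,r)$ for all small $r$ and is a Lebesgue point of $u$, so $\bar u_{x,r}=\fint_{B(x,r)}u\to u(x)$; hence $u^\ast=u$ a.e.\ in $\Omega$.

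Finally, for uniform continuity I would fix $x,y\in\Omega$ with $\delta:=\abs{x-y}$ small, set $r:=2\delta$ (so that $\abs{x-y}\le r$ and $\Omega(y,r)\subseteq\Omega(x,2r)$), and split
\[
\abs{u^\ast(x)-u^\ast(y)}\le\abs{u^\ast(x)-\bar u_{x,r}}+\abs{\bar u_{x,r}-\bar u_{y,r}}+\abs{\bar u_{y,r}-u^\ast(y)}\le C(n,A_0)\bigl(\sigma(r)+\omega_u(2r)\bigr),
\]
using $\abs{u^\ast(\cdot)-\bar u_{\cdot,r}}\le 2^{n}A_0^{-1}\sigma(r)$ for the outer terms and the comparison of $\bar u_{x,r},\bar u_{y,r}$ to $\bar u_{x,2r}$ for the middle one. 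Combining with $\sigma(r)\le C(n,A_0)\int_0^{2r}\omega_u(t)t^{-1}\,dt$ gives the explicit modulus $\abs{u^\ast(x)-u^\ast(y)}\le C(n,A_0)\bigl(\int_0^{4\abs{x-y}}\omega_u(t)t^{-1}\,dt+\omega_u(4\abs{x-y})\bigr)$, which is independent of $x,y$ and tends to $0$ as $\abs{x-y}\to0$; thus $u^\ast$ is uniformly continuous on $\Omega$ (indeed it extends continuously to $\overline\Omega$).

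The step I expect to require the most care is the first one: the measure density hypothesis is exactly what is needed to compare the averages $\bar u_{x,\rho}$ and $\bar u_{z,s}$ over the possibly different and possibly off-center sets $\Omega(x,\rho)$, $\Omega(z,s)$ without the constants degenerating near $\partial\Omega$, and the passage from the Dini integral to the dyadic sums $\sum_k\omega_u(2^{-k}r)$ — which must be done by comparing each scale only to the adjacent larger scales, since $\omega_u$ need not be monotone — is elementary but easy to get slightly wrong. Once those are set up, the existence of the limit $u^\ast$, the identification $u^\ast=u$ a.e., and the three-term continuity estimate are all routine.
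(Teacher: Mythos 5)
Your proposal is correct and follows essentially the same Campanato-type argument as the paper: telescoping dyadic averages controlled via the measure density condition, the dyadic-sum-to-Dini-integral comparison (using that $\omega_u$ is almost monotone across adjacent scales), Lebesgue differentiation to identify $u^\ast=u$ a.e., and a three-term split for the modulus of continuity. The only differences are cosmetic — you compare $\bar u_{x,r}$ and $\bar u_{y,r}$ through $\bar u_{x,2r}$ rather than through the average over a half-ball centered at the midpoint, and you prove explicitly the doubling-type bound $\omega_u(\rho)\le 2^{n+1}A_0^{-1}\omega_u(s)$ for $\rho\le s\le 2\rho$ that the paper simply cites.
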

\begin{proof}
In the proof we shall denote
\[
\bar u_{x,r}= \bar u_{\Omega(x,r)}=\fint_{\Omega(x,r)} u\quad\text{and}\quad\omega(r)=\omega_u(r)=\sup_{x\in \overline{\Omega}} \fint_{\Omega(x,r)} \,\abs{u(y)-\bar u_{x,r}}\,dy.
\]
By taking the average over $\Omega(x, \frac12 r)$ to the triangle inequality
\[
\abs{\bar u_{x,r} - \bar u_{x,\frac12 r}} \le \abs{u- \bar u_{x,r}} + \abs{u-\bar u_{x, \frac12 r}}
\]
and using  $\abs{\Omega(x, r)} / \abs{\Omega(x,\frac12 r)} \le  2^{n} /A_0$, we get
\[
\abs{\bar u_{x,r} - \bar u_{x,r/2}} \le (2^n/A_0) \omega(r) + \omega(\tfrac12 r) \le (2^n/A_0) \left( \omega(r)+ \omega(\tfrac12 r) \right).
\]
By telescoping, we get
\begin{equation}			\label{eq1109w}
\begin{aligned}
\abs{\bar u_{x,r}- \bar u_{x, 2^{-k}r}} &\le \sum_{j=0}^{k-1}\, \abs{\bar u_{x,2^{-j}r}-\bar u_{x, 2^{-(j+1)}r}}\\
& \le (2^n/A_0)\left( \omega(r)+2\omega(\tfrac12 r)+ \cdots + 2 \omega(\tfrac{1}{2^{k-1}} r) + \omega(\tfrac{1}{2^k} r)\right) \\
& \le (2^{n+1}/A_0) \sum_{j=0}^\infty \omega(\tfrac{1}{2^j} r)  \lesssim \int_0^r \frac{\omega(t)}{t}\,dt,
\end{aligned}
\end{equation}
where in the last step we used the fact that $\omega(t) \simeq \omega(\frac{1}{2^j}r)$ when $t \in (\frac{1}{2^{j+1}}r, \frac{1}{2^j}r]$; see \cite{DK17}.
Note that the last inequality also implies that
\begin{equation}					\label{eq1048tue}
\omega(r) \lesssim \int_0^r \frac{\omega(t)}{t}\,dt.
\end{equation}
Now, we define the function $u^\ast$ on $\Omega$ by setting $u^\ast(x)=\lim_{r\to 0} \bar u_{x,r}$.
By the Lebesgue differentiation theorem, we have $u=u^*$ a.e.
By letting $k\to \infty$ in \eqref{eq1109w}, we obtain
\begin{equation}					\label{eq1011tue}
\abs{u^*(x)-\bar u_{x,r}} \lesssim \int_0^r \frac{\omega(t)}{t}\,dt\quad\text{for a.e. $x \in \Omega$.}
\end{equation}

For any $x$, $y$ in $\Omega$, let $r=\abs{x-y}$, $z=\frac12 (x+y)$, and use \eqref{eq1011tue} to get
\[
\abs{u^\ast(x)-u^\ast(y)} \le \abs{u^\ast(x)-\bar u_{x,r}}+ \abs{u^\ast(y)-\bar u_{y,r}} + \abs{\bar u_{x,r}-\bar u_{y,r}}  \lesssim \int_0^{\abs{x-y}} \frac{\omega(t)}{t}\,dt +\abs{\bar u_{x,r}-\bar u_{y,r}}.
\]
By taking the average over $\Omega(z,\frac12 r)$ to the triangle inequality
\[
\abs{\bar u_{x,r} - \bar u_{y,r}} \le \abs{u- \bar u_{x,r}} + \abs{u-\bar u_{y, r}}
\]
and noting that $\Omega(z, \frac12 r) \subset \Omega(x,r)\cap \Omega(y,r)$, we get
\[
\abs{\bar u_{x,r} - \bar u_{y,r}} \le (2^{n+1}/A_0) \omega(\abs{x-y}).
\]
Combining together and using \eqref{eq1048tue}, we conclude that
\[
\abs{u^\ast(x)-u^\ast(y)} \lesssim \int_0^{\abs{x-y}} \frac{\omega(t)}{t}\,dt + \omega(\abs{x-y}) \lesssim \int_0^{\abs{x-y}} \frac{\omega(t)}{t}\,dt.
\]
Therefore, we see that $u^\ast$ is uniformly continuous with its modulus of continuity dominated by the function $\displaystyle\rho(r):=\int_0^r \frac{\omega(t)}{t}\,dt$.
\end{proof}


\end{document}